\def\lbr{\left\{}
\def\rbr{\right\}}
\def\me{{\mathbb  E}}
\def\mr{{\mathbb  R}}
\def\mp{{\mathbb  P}}
\newcommand{\rf}[1]{(\ref{#1})}
\def\lnfrac#1#2{\raise.7ex \hbox{\Small $#1$}
  \kern-.15em/\kern-.15em  \lower.2ex \hbox{\Small $#2$}}
\theoremstyle{plain}
\newtheorem{theorem}{Theorem}[section]
\newtheorem{lemma}[theorem]{Lemma}
\theoremstyle{definition}
\numberwithin{equation}{section}
\begin{document}

\title[Arc-sine law]{Geometric Deviation From L\'evy's Occupation Time Arcsine Law}
\vskip 1cm

\author{Elton P. Hsu}
\address{Department of Mathematics\\
        Northwestern University \\ Evanston, IL 60208}
\email{ehsu\@@math.northwestern.edu}
\author{Cheng Ouyang}
\address{Department of Math, Statistics and Computer Science\\
University of Illinois at Chicago \\ Chicago, IL 60607}
\email{couyang@math.uic.edu}

\subjclass[2010]{Primary-60D58; secondary-28D05}

\keywords{Brownian motion, arcsine law, local time, mean curvature}

\begin{abstract} We prove a geometric extension of L\'evy's occupation time arcsine law near a hypersurface on a Riemannian manifold. The deviation from the classic arcsine law is of the order of the square-root of the time horizon and is expressed explicitly in terms of the mean curvature of the hypersurface and the local time of the underlying standard Brownian motion.
\end{abstract}
\maketitle

\section{Introduction}
Let $W = \lbr W_t, \, t\ge 0\rbr$ be a standard one-dimensional Brownian motion starting from the origin.  L\'evy's occupation time arcsine law (L\'evy~\cite{Levy, Levy2}) states that the total time spent by $W$ in the half line $\mr_+ = [0, \infty)$ up to time $1$ obeys the arcsine law
\begin{equation*}
\mp\left[\int_0^11_{\mr_+}(W_s)\, ds\le x\right] = \frac2\pi\arcsin\sqrt x.
\end{equation*}
A proof using techniques developed since L\'evy was given by Kac~\cite{Kac}, which was reproduced with all necessary details in Karatzas and Shreve~\cite{KaratzasS}. Another more intriguing proof can be found in Revuz and Yor~\cite{RevuzY}. 

For a Brownian motion on a Riemannian manifold stating from a smooth hypersurface, we expect that the arcsine law holds approximately for small time horizons. It is therefore natural to investigate the deviation from the classical arcsine law caused by the geometry of the hypersurface relative to the ambient space. At a preliminary level, the problem is not difficult to formulate. Let $M$ be a Riemannian manifold and $N$ a smooth hypersurface in $M$. Locally $N$ divides $M$ into two disjoint parts $M_+$ and $M_-$ with $N$ as the common boundary. Take a point $o\in N$ as the starting point of a Riemannian Brownian motion $X$. We expect the normalized total time the Brownian motion spends in the half-space $M_+$
\begin{equation}
 \frac1t\int_0^t1_{M_+}(X_s)\, ds\label{occupation}
\end{equation}
approaches the arcsine law in distribution when $t$ tends to zero, and the deviation from this law should be related to the local geometry of the hypersurface $N$ in the ambient space $M$ near the starting point $o$. The purpose of the present work is to formulate and prove a result in this direction. Our result {\sc Theorem}  \ref{main} shows that, if formulated properly, the deviation is of the order  $\sqrt t$ and is proportional to the mean curvature of $N$ at $o$ and the integral 
$$\int_0^1 s \, dL_s,$$
where $L$ is the (time scaled) local time of the Riemannian Brownian motion. 

Since the random variables \rf{occupation} converges to the arcsine law only in distribution, we need to choose our setting carefully in order to formulate a rigorous result. By the principle of not feeling the boundary (see Hsu~\cite{Hsu1}), we will assume that our Riemannian manifold $M$ is diffeomorphic to $\mr^n$ with a globally defined Riemannian metric $g$, which may be assumed to be euclidean outside a neighborhood of the origin $o$. In the global coordinates $x = (x^1, \tilde x) = (x^1, \ldots, x^{n-1}, x^n)$, the hypersurface
is given by $N = \lbr x\in\mr^n\vert x^1 = 0\rbr$. For each fixed positive $t$, we can rewrite the occupation time in \rf{occupation} in the form
$$T_t = \int_0^11_{M+}\left(X_{st}\right)ds.$$
Instead of working with a single Riemannian Brownian motion $X$, we will generate a family of time scaled Brownian motion $X^t = \lbr X^t_s, \ s\ge 0\rbr$ as the solutions of a stochastic differential equation driven by a fixed euclidean Brownian motion $W$ in such a way that our $X^t$ (defined on the same probability space) has the same law as the $\lbr X_{st}, \ s\ge 0\rbr$ for each fixed $t$ (see below). 
Under this setting, our main result can be stated as follows. .

\begin{theorem} \label{main} Let $W$ be the (common) driving Brownian motion (more precisely, the first component thereof) of the (time-scaled) Riemannian Brownian motions $X^t$ and $L = \lbr L_s, \, s\in\mr_+\rbr$ the local time of $W$ at $x = 0$. Then in $L^p$ with $p\ge 1$, as $t\downarrow 0$, 
\begin{equation*}
T_t= \int_0^11_{\mr_+}(W_u)du+\frac12{\sqrt{t}H}\int_0^1u\,dL_u+O(t^{3/4}).
\end{equation*}
Here $H$ is the mean curvature of the hypersurfacee $N$ at the starting point $o$. 
\end{theorem}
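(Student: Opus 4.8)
The plan is to reduce everything to the behavior of the first coordinate $Y_s = x^1(X^t_s)$, since $1_{M_+}(X^t_s) = 1_{\mr_+}(Y_s)$ and hence $T_t = \int_0^1 1_{\mr_+}(Y_s)\, ds$. Working in Fermi coordinates adapted to $N$ near $o$, I may take $x^1$ to be the signed distance to $N$, so that $|\nabla x^1|_g \equiv 1$ and the restriction of $\Delta_M x^1$ to $N$ equals the mean curvature $H$. Applying It\^o's formula to the time-scaled Riemannian Brownian motion and using L\'evy's characterization (the martingale part of $x^1(X^t)$ has quadratic variation $\int_0^s |\nabla x^1|_g^2\, du = s$ before scaling), I would write
\begin{equation*}
Y_s = \sqrt t\, W_s + \frac t2\int_0^s \Delta_M x^1(X^t_u)\, du,
\end{equation*}
where, under the moving-frame construction of $X^t$ with first frame field $\nabla x^1$, the martingale is exactly $\sqrt t$ times the first component $W$ of the driving Brownian motion.

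First I would establish the a priori bound that $X^t$ does not leave an $O(\sqrt t)$-neighborhood of $o$: from $Y_s = \sqrt t\, W_s + O(t)$ and the analogous estimates for the remaining coordinates, $\sup_{0\le s\le 1}\mathrm{dist}(X^t_s, o) = O(\sqrt t)$ in every $L^p$. Since $\Delta_M x^1$ is smooth and equals $H$ at $o$, this gives $\Delta_M x^1(X^t_u) = H + O(\sqrt t)$ uniformly in $u\le 1$, whence
\begin{equation*}
Y_s = \sqrt t\, W_s + \frac{tH}{2}\, s + O(t^{3/2}) = \sqrt t\Big(W_s + \tfrac{\sqrt t\,H}{2}\, s + O(t)\Big).
\end{equation*}
Because $1_{\mr_+}$ is invariant under positive scaling, $T_t = \int_0^1 1_{\mr_+}(W_s + \varepsilon\phi_s)\, ds$ with $\varepsilon = \sqrt t$ and $\phi_s = \tfrac H2 s$, plus a further perturbation of size $O(t)$ inside the indicator.

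The analytic heart of the argument is a first-order expansion of the occupation functional in the perturbation, which I expect to be the main obstacle because $1_{\mr_+}$ is discontinuous. The claim is that, in $L^p$,
\begin{equation*}
\int_0^1\big[1_{\mr_+}(W_s+\varepsilon\phi_s) - 1_{\mr_+}(W_s)\big]\, ds = \varepsilon\int_0^1\phi_s\, dL_s + O(\varepsilon^{3/2}).
\end{equation*}
I would prove this pathwise via the space-time occupation-density formula $\int_0^1 f(s,W_s)\, ds = \int_{\mr}\int_0^1 f(s,a)\, dL^a_s\, da$ applied to the indicator of the thin region between $0$ and $-\varepsilon\phi_s$; replacing the spatial local time $L^a_s$ by its value $L^0_s = L_s$ at $a=0$ produces the leading term $\varepsilon\int_0^1\phi_s\, dL_s$ exactly, while the replacement error is controlled by the H\"older-$1/2$ regularity of $a\mapsto L^a_1$ in $L^p$ (i.e. $\me|L^a_1 - L^0_1|^p \lesssim |a|^{p/2}$ via the Burkholder--Davis--Gundy and Kolmogorov estimates), which integrates to $O(\varepsilon^{3/2})$.

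Finally I would assemble the pieces. With $\varepsilon = \sqrt t$ the leading term is $\tfrac12\sqrt t\,H\int_0^1 s\, dL_s$ and the local-time replacement error is $O(t^{3/4})$. The secondary perturbation of size $O(t)$ inside the indicator---coming both from the curvature expansion of the drift and from the random part $R_s = O(\sqrt t)$ of $\int_0^s\Delta_M x^1(X^t_u)\, du$---is handled by the same occupation formula, which holds pathwise and hence accommodates the adapted, $W$-correlated drift, and contributes only $O(t)$. Summing the errors, $O(t^{3/4}) + O(t) = O(t^{3/4})$, which yields the stated expansion.
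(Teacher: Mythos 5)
Your proposal is correct and follows essentially the same route as the paper: reduction to the first coordinate via the SDE in semi-geodesic (Fermi) coordinates, $X^{t,1}_s=\sqrt t\,W_s+\frac t2\int_0^s b^1(X^t_u)\,du$, then the space-time occupation-density formula applied to the thin region between $0$ and $-\frac12\sqrt t H s$, with the replacement $dL^a_s\to dL^0_s$ producing the leading term exactly and the error controlled by the H\"older-$1/2$ regularity of $a\mapsto L^a$ in $L^p$ (Tanaka plus Burkholder--Davis--Gundy), giving $O(t^{3/4})$. The only difference is technical and harmless: where the paper controls the drift remainder by localizing to a ball of radius $t^\alpha$, $\alpha<1/2$ (exit probability $o(t^n)$), and then bounding the occupation of a thin strip by a Kac-type successive-conditioning moment estimate, you bound the supremum of the process in $L^p$ and invoke a pathwise local-time bound for the strip occupation --- which, done carefully (using the local time of the drifted process $W_s+\frac12\sqrt t Hs$ and H\"older's inequality to decouple the random strip width from $\sup_a L^a_1$), indeed yields an $O(t)$ contribution, well within the $O(t^{3/4})$ budget.
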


{\sc Section 4} will be devoted to the proof of this result. In {\sc Section} 2 we review the definition of the local time of a one-dimensional Brownian motion and a time-dependent occupation time formula. In {\sc Section} 3 we review the formulation of Riemannian Brownian motion in semi-geodesic local coordinates on a Riemannian manifold.  In the last {\sc Section 5}, we will make several value added remarks that the reader will regret to skip. 

\section{A time-dependent occupation time formula}
The local time $L^x = \lbr L^x_s,\, s\in\mr_+\rbr$ at $x\in\mr$ of a one-dimensional Brownian motion $W$ is defined by the relation
\begin{align}\label{local time representation}\vert W_t -x\vert = \vert W_0-x\vert + \int_0^t \text{sgn} (W_s-x)\, dW_s + L^x_t.\end{align}
It is well known result of Wang~\cite{Wang} that there is a version of the local time jointly continuous in $(x,s)\in \mr\times\mr_+$.  The following occupation time formula holds: with probability one, for every nonnegative measurable function $\mr\times\mr_+\rightarrow\mr_+$ and every $t$,
\begin{equation}
\int_0^t\varPsi(W_s, s)\, ds = \int_\mr\left[\int_0^t \varPsi(x, s)\, dL_s^x\right]\, dx.\label{cop}
\end{equation} 
See {\sc Exercise} (1.15) on page 232 of Revuz and Yor~\cite{RevuzY}. 

It is a classical fact that for a fixed $t$, the local time $L_t = L^0_t$ at $x = 0$ has the same law as $\vert W_t\vert$. 

\section{Brownian motion in semi-geodesic coordinates}

Let $M$ be a Riemannian manifold and $N$ a smooth hypersurface in $M$. Near a point $o\in N$,  the manifold can be parametrized by a system of semi-geodesic coordinates $x = (x^1, \tilde x)$, where $x^1$ is the signed distance of $x$ to $N$ and $\tilde x = (x^2, \ldots, x^n)$ a system of normal coordinates of $N$ centered at $o$. In these coordinates, the Riemannian metric is given by
$$g_{ij}(x) = \begin{cases}
\delta_{1j}, &i = 1, 1\le j\le n;\\
\delta_{ij} + 2\varPi_{ij}x_1 +  O(\vert x\vert^2),\ &2\le i, j\le n.
\end{cases}$$
Here 
$$\varPi_{ij}=\left\langle \frac\partial{\partial x^1}, \nabla_{\partial/\partial x^i}\frac\partial{\partial x^j}\right\rangle$$
is the second fundamental form of $N$ at $o$ expressed in terms of the normal coordinates $\tilde x = (x^2, \ldots, x^n)$; see Hsu~\cite{Hsu2}.

A Riemannian Brownian motion $X= (X^1, \tilde X)$ on $M$ is generated by half of the Laplace-Beltrami operator 
$$\Delta = \frac1{\sqrt{\det g}}\frac\partial{\partial x^i}\left(\sqrt{\det g}g^{ij}\frac\partial{\partial x^j}\right),$$
where $g = (g_{ij})$ is the metric matrix and $(g^{ij}) = g^{-1}$ its inverse.  We are interested in the first component $X^1$ of the Brownian motion. A simple calculation shows that
$$ \Delta = \left(\frac\partial{\partial x^1}\right)^2 + b^1(x)\frac\partial{\partial x^1} + \text{partial derivatives with respect to}\ \tilde x,$$
where 
\begin{equation} 
b^1(x) = H+ O(\vert x\vert),\label{drift}
\end{equation}
with $H = \text{Tr}\, \varPi$, the mean curvature of the hypersurface $N$ at $o$.
Using the stochastic differential equation for the diffusion process generated by $\Delta/2$ (see Hsu~\cite{Hsu}), the first component of the Brownian motion on $M$ is given by 
\begin{equation}X_s^1=W_s+\frac12\int_0^s b^1(X_u)du,\label{firstcomp}\end{equation}
where $W$ is a one-dimensional Brownian motion. However, as explained in the introduction, for the current problem, we consider the time-scaled process $X^t = \lbr X^t_s, \, 0\le s\le1\rbr$ for each fixed $t$ and write
\begin{equation}
X^{t,1}_s = \sqrt t W_s +\frac t2\int_0^s b^1(X^t_u)\, du.\label{scaledb}
\end{equation}
Thus $\lbr X^t_s, \ s\in\mr_+\rbr$ and $\lbr X_{ts}, \ s\in\mr_+\rbr$ have the same law for each fixed $t$. Its occupation time in the half space $M_+$ up to time $1$ is given by
\begin{equation}
T_t = \int_0^1 1_{\mr_+}(X^{t,1}_u)\, du,\label{ot}
\end{equation}
which has the same distribution as the normalized occupation time of a Riemannian Brownian moiton in $M_+$ up to time $t$, again for each fixed $t$. From this expression and \rf{scaledb} it is clear that as $t\downarrow 0$, the random variables $T_t$ converges almost surely to 
$$ \int_0^11_{\mr+}(W_s)\, ds,$$
which, according to L\'evy, has the arcsine law. In the next section we will calculate the deviation of $T_t$ from this limit. 

\section{Deviation of the occupation time from the arcsine law}

The occupation time $T_t$ of $X^t$ up to time 1 in the half space $M_+$ is given by  \rf{ot}. Let $L = \lbr L_s, \, 0\le s\le 1\rbr$ denote the local time of the Brownian motion $W$ at $x = 0$.  
The main result of this section is {\sc Theorem} \ref{lpapprox}, the first order approximation of $T_t$ in the sense of $L_p$. 
We start with the following lemma, which reduces the problem to that of a standard one-dimensional Brownian motion. 

\begin{lemma}\label{cut tail}
For any $p\geq 1$ and $0<\alpha< 1/2$, as $t\downarrow 0$ we have 
$$\me\left|T_t- \int_0^11_{\mr_+}(W_u+\frac12{\sqrt{t}}Hu)du\right|^p=O(t^{(1/2+\alpha)p}).$$
\end{lemma}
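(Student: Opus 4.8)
The plan is to compare the two occupation times indicator-by-indicator and to exploit the fact that replacing the exact drift by its leading term $H$ only moves the relevant sign-change level by an amount of order $t$, not $\sqrt t$. Since $\sqrt t>0$, equation \rf{scaledb} gives $1_{\mr_+}(X^{t,1}_u)=1_{\mr_+}(W_u+A_u)$ with $A_u=\frac{\sqrt t}{2}\int_0^u b^1(X^t_v)\,dv$, while the comparison process is $1_{\mr_+}(W_u+B_u)$ with $B_u=\frac{\sqrt t}{2}Hu$. Writing $\hat W_u=W_u+B_u$, the difference of the two indicators at time $u$ is nonzero only when $W_u$ lies between $-A_u$ and $-B_u$, hence only when $|\hat W_u|\le|A_u-B_u|$. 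Setting $\delta=\sup_{0\le u\le1}|A_u-B_u|$ I therefore obtain the pointwise bound
\begin{equation*}
\Big|T_t-\int_0^11_{\mr_+}(W_u+B_u)\,du\Big|\le\int_0^11_{\{|\hat W_u|\le\delta\}}\,du.
\end{equation*}

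The next step is to estimate $\delta$. Using \rf{drift} in the form $|b^1(x)-H|\le C|x|$, valid on a coordinate ball $\{|x|\le r_0\}$, together with the a priori bound $\sup_{u\le1}|X^t_u|\le\sqrt t\,\Xi+O(t)$ coming from \rf{scaledb} (where $\Xi$ is the supremum over $[0,1]$ of the norm of the driving Brownian motion and has moments of all orders), I get $\delta\le\frac{C}{2}t\,\Xi$ on the event $G=\{\sup_{u\le1}|X^t_u|\le r_0\}$. The key gain is that $\delta$ is of order $t$, not $\sqrt t$: it is precisely the linearization error $b^1(X^t_v)-H$ that supplies the extra factor $\sqrt t$.

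To convert the band estimate into the claimed $L^p$ bound I would apply the occupation time formula \rf{cop} to the continuous semimartingale $\hat W$, whose local time $\hat L^x$ exists and is jointly continuous, obtaining
\begin{equation*}
\int_0^11_{\{|\hat W_u|\le\delta\}}\,du=\int_{-\delta}^{\delta}\hat L^x_1\,dx\le 2\delta\,\sup_{x\in\mr}\hat L^x_1.
\end{equation*}
Since $\hat W$ is Brownian motion plus the bounded drift $B$, a Girsanov change of measure shows that $\sup_x\hat L^x_1$ has finite moments of all orders. On $G$ the right-hand side is then at most $Ct\,\Xi\,\sup_x\hat L^x_1$, and Cauchy--Schwarz gives an $L^p$ contribution of order $t$. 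The complementary event $G^c$ is harmless: there the difference of the two occupation times is trivially bounded by $1$, so its $L^p$ contribution is at most $\mp(G^c)^{1/p}$, and the Gaussian tail estimate $\mp(G^c)=\mp(\sup_{u\le1}|X^t_u|>r_0)\le C'\exp(-c r_0^2/t)$ makes this super-polynomially small in $t$. Combining the two events yields an $L^p$ norm of order $t$, which in particular is $O(t^{1/2+\alpha})$ for every $\alpha\in(0,1/2)$, i.e. the stated $O(t^{(1/2+\alpha)p})$ after raising to the $p$-th power.

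The step I expect to be the main obstacle is the band estimate itself. Enclosing the thin moving band $\{|\hat W_u|\le\delta\}$ by a fixed band $\{|W_u|\le c\sqrt t\}$ centered at the origin would only produce the insufficient order $t^{1/2}$, because the moving center $-B_u$ already lives at height $O(\sqrt t)$; the genuine improvement comes from keeping the band centered at $-B_u$ and appealing to the occupation density of the drifted process $\hat W$ rather than that of $W$. Controlling $\delta$ and $\sup_x\hat L^x_1$ simultaneously and uniformly as $t\downarrow0$, while separately disposing of the exit event $G^c$, is where the real work lies.
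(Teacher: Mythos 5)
Your proposal is correct, but it takes a genuinely different route from the paper's and in fact yields a stronger bound. The paper localizes to a \emph{shrinking} ball of radius $t^\alpha$ (its exit probability before time $1$ being super-polynomially small), so that the perturbing drift obeys the deterministic bound $\sqrt t\,|h_u|\le Ct^{1/2+\alpha}$; the difference of the two occupation times is then dominated by the time the drifted process $Z_u=W_u+\frac12\sqrt t Hu$ spends in a band of width $2Ct^{1/2+\alpha}$, and the $p$-th moment of that band occupation time is estimated by iterated conditioning via the Markov property (a Kac-moment-formula argument) together with the Gaussian density bound $1/\sqrt{2\pi s}$, giving exactly the stated $O(t^{(1/2+\alpha)p})$. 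You instead localize to a ball of \emph{fixed} radius $r_0$ and retain the random scale $\sqrt t\,\Xi$ of the process inside it, which makes your band width $\delta$ of order $t\,\Xi$ rather than $t^{1/2+\alpha}$; you then bound the band occupation time by $2\delta\sup_x\hat L^x_1$ through the occupation density of $\hat W$, and finish with Girsanov and H\"older. This buys a sharper conclusion, $\|\cdot\|_p=O(t)$, of which the lemma is a weakening; what the paper's route buys is self-containedness, since it needs nothing beyond the Markov property and a Gaussian density bound, whereas your argument needs two external ingredients that you should make explicit: (i) the occupation time formula for the continuous semimartingale $\hat W$ (the formula \rf{cop} is stated only for $W$, though since your test function $1_{[-\delta,\delta]}$ is time-independent the standard semimartingale version from Revuz--Yor suffices), and (ii) the fact that $\sup_x L^x_1$ of a \emph{standard} Brownian motion has finite moments of all orders --- Girsanov reduces $\hat W$ to $W$, but this last fact is itself a nontrivial (if classical) input, e.g.\ from the Barlow--Yor inequalities, and is not produced by the change of measure alone. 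A cosmetic point at the same level of informality as the paper: your $\Xi$ is literally the supremum of the driving Brownian motion only for the first component of $X^t$; the martingale parts of the remaining components are stochastic integrals with bounded coefficients, so $\Xi$ should be taken as their rescaled supremum, which still has Gaussian tails uniformly in $t$.
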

\begin{proof} We need to localize the calculation to a neighborhood of the origin $o$ without affecting the leading deviation term. For this purpose, let $B(t^\alpha)$ be the (shrinking) ball of radius $t^\alpha$ centered at the origin and 
\begin{align}\label{tau}\tau^t=\inf\{s\geq 0: X^t_s\in B(t^\alpha)^c\}\end{align}
the first exit time of $X^t$ from this ball. From the stochastic differential equation for $X^t$ we see that it is the sum of $\sqrt t$ times a euclidean Brownian motion and a drift uniformly bounded by a constant multiple of $t$. For the less obvious first component this is more explicitly displayed in \rf{scaledb}. Hence there is a positive constant $c$ such that
$$\mp\{ \tau^t>1\}\le e^{-ct^{-(1/2-\alpha)}}=o(t^n)$$
for any positive $n$; hence, 
\begin{align*}
&\me\left| T_t- \int_0^11_{\mr_+}(W_u+\frac12\sqrt{t}Hu)du\right|^p\\
=&\me \left(\left\vert T_t- \int_0^11_{\mr_+}(W_u+\frac12\sqrt{t}Hu)du\right\vert^p;  \tau^t \leq 1\right)+o(t^n).
\end{align*}
From \rf{scaledb} the drift of $X^{t,1}/\sqrt t$ after removing its limiting value $Hs$ is 
\begin{align}\label{def h}\sqrt t\, h_s=\sqrt t\int_0^s\left[b^1({X}_u^t)- H\right]\, du.\end{align}
We have
$$T_t=\int_0^11_{\mr_+}(W_u+\frac12\sqrt{t}(Hu+h_u))du,$$
and from this,
$$\left|T_t- \int_0^11_{\mr_+}(W_u+\frac12\sqrt{t}Hu)du\right|\le \int_0^1 1_{[-\sqrt t\vert h_u\vert, \sqrt t\vert h_u\vert]}(W_u+\frac12\sqrt t Hu)\, du.$$
When $\tau^t\leq 1$,  the process $X_v^t$ is bounded by $t^\alpha$ for $v\in [0,1]$, hence from \rf{drift} we have
$$|h_u|\le \int_0^u\left|b(X_v^t)-b(0)\right|dv\le Ct^\alpha$$
for some constant $C$ depending only on $b$. Therefore, upon writing 
$$Z_u = W_u+\frac12\sqrt t Hu,\qquad \phi = 1_{[-Ct^{1/2+\alpha}, \ Ct^{1/2+\alpha}]}$$
for simplicity we have 
\begin{equation}
\left|T_t- \int_0^11_{\mr_+}(W_u+\frac12\sqrt{t}Hu)du\right|\le \int_0^1 \phi(Z_s)\, ds.\label{firstapprox}
\end{equation}
In order to estimate the $p$th moment of this expression, where $p$ is assumed to be a positive integer without loss of generaility, we use 
$$\left(\int_0^1\phi(Z_s)\, ds\right)^p = p!\int_0^1\int_{u_1}^1\cdots\int_{u_{p-1}}^1\phi(Z_{u_1})\phi(Z_{u_2})\cdots \phi(Z_{u_p})\,du_1du_2\cdots du_p.$$
The process $Z$ is Markov. By conditioning at $u_p, u_{p-1}, \ldots, u_1$ successively we see that
$$\me_x\left(\int_0^1\phi(Z_s)\, ds\right)^p \le p!\max_{z\in\mr, \ 0\le u\le 1}\left|\me_z\int_0^u\phi(W_s+z)\,ds\right|^p.$$
However, $\me\phi(W_s+z)$ is the probability that $W_s$ lies in a certain interval of length $2Ct^{1/2+\alpha}$. Since the density of $W_s$ does not exceed $1/\sqrt{2\pi s}$, this probability is bounded by $2Ct^{1/2+\alpha}/\sqrt{2\pi s}$, hence
$$\me\int_0^u\phi(W_s+z)\, ds\le C_1t^{1/2+\alpha}$$
for another constant $C_1$. It follows from \rf{firstapprox} that
$$\me \left( \left\vert T_t- \int_0^11_{\mr_+}(W_u+\frac12\sqrt{t}Hu)du\right\vert^p; \tau^t\leq 1\right)\le Ct^{(1/2+\alpha)p}$$ 
and the proof is completed.
\end{proof}

We now state and prove the main result of this paper. 

\begin{theorem} \label{lpapprox} For any $p\geq1$, as $t\downarrow 0$ we have
\begin{align*}
\left\|T_t- \int_0^1 1_{\mr_+}(W_u)du-\frac12\sqrt{t}H\int_0^1u\,dL_u\right\|_p = O(t^{3/4}).
\end{align*}
\end{theorem}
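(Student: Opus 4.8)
The plan is to combine {\sc Lemma}~\ref{cut tail} with an explicit local-time computation of the deterministic-shift term. Writing $a_u=\tfrac12\sqrt t\,Hu$, the triangle inequality in $L^p$ reduces the estimate to bounding separately
$$T_t-\int_0^11_{\mr_+}(W_u+a_u)\,du\qquad\text{and}\qquad D_t-\frac12\sqrt t\,H\int_0^1u\,dL_u,$$
where $D_t:=\int_0^1\bigl[1_{\mr_+}(W_u+a_u)-1_{\mr_+}(W_u)\bigr]\,du$. The first difference is exactly what {\sc Lemma}~\ref{cut tail} controls: taking $\alpha=1/4$ there gives an $L^p$ bound of order $t^{1/2+\alpha}=t^{3/4}$, precisely the error we are permitted. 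Everything therefore comes down to identifying the leading behaviour of $D_t$.

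To evaluate $D_t$ I would apply the occupation time formula \rf{cop} to $\varPsi(x,u)=1_{\mr_+}(x+a_u)-1_{\mr_+}(x)$. Assume $H>0$ (the case $H<0$ is symmetric and $H=0$ trivial). For fixed $x$ the integrand vanishes unless $-a_1\le x<0$, and on that range it equals $1$ precisely when $u\ge u_0(x):=-2x/(\sqrt t\,H)$; hence $\int_0^1\varPsi(x,u)\,dL_u^x=L_1^x-L_{u_0(x)}^x$. Integrating over $x$ and substituting $x=x_v:=-\tfrac12\sqrt t\,Hv$, so that $u_0(x)=v$, gives the clean representation
$$D_t=\frac12\sqrt t\,H\int_0^1\bigl(L_1^{x_v}-L_v^{x_v}\bigr)\,dv.$$

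The leading term emerges upon replacing the shifted local times $L^{x_v}$ by $L=L^0$. An integration by parts gives $\int_0^1(L_1-L_v)\,dv=\int_0^1 v\,dL_v$, so this replacement produces exactly $\tfrac12\sqrt t\,H\int_0^1 u\,dL_u$, the asserted deviation term. The remainder is $\tfrac12\sqrt t\,H\int_0^1\bigl[(L_1^{x_v}-L_v^{x_v})-(L_1-L_v)\bigr]\,dv$, and by Minkowski's integral inequality its $L^p$ norm is at most $\tfrac12\sqrt t\,|H|\int_0^1\bigl(\|L_1^{x_v}-L_1\|_p+\|L_v^{x_v}-L_v\|_p\bigr)\,dv$. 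Thus I need the spatial $L^p$-modulus of continuity of local time, namely $\|L_s^x-L_s^0\|_p\le C_p|x|^{1/2}$ uniformly for $s\in[0,1]$.

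Establishing this estimate with the sharp exponent $1/2$ is the crux of the argument: the almost-sure H\"older exponent of $x\mapsto L_s^x$ is only $1/2-\epsilon$, so a pathwise bound would cost a power of $t$. I would instead work in $L^p$ directly from Tanaka's formula \rf{local time representation}. Subtracting the formula at $x$ from the one at $0$ (recall $W_0=0$) expresses $L_s^x-L_s^0$ as a bounded-variation term of size $O(|x|)$ plus the martingale $\int_0^s\bigl[\text{sgn}(W_r-x)-\text{sgn}(W_r)\bigr]\,dW_r$, whose quadratic variation equals $4\int_0^s1_{(0,x]}(W_r)\,dr$ for $x>0$. The Burkholder--Davis--Gundy inequality, combined with the occupation-time moment bound $\me\bigl(\int_0^11_{(0,x]}(W_r)\,dr\bigr)^q\le C_q|x|^q$ --- which follows from successive Markov conditioning and the density bound $1/\sqrt{2\pi r}$ exactly as in the proof of {\sc Lemma}~\ref{cut tail} --- yields $\|L_s^x-L_s^0\|_p\le C_p|x|^{1/2}$. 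Inserting this with $|x_v|\le\tfrac12\sqrt t\,|H|$ bounds the remainder by a constant multiple of $\sqrt t\cdot t^{1/4}=t^{3/4}$; together with the contribution of {\sc Lemma}~\ref{cut tail}, this completes the proof.
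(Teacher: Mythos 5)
Your proposal is correct and takes essentially the same route as the paper: reduction via {\sc Lemma}~\ref{cut tail}, the occupation time formula \rf{cop} to isolate the leading term $\tfrac12\sqrt{t}H\int_0^1u\,dL_u$, and control of the remaining shifted-local-time increments $L^x-L^0$ through Tanaka's formula \rf{local time representation}, the Burkholder--Davis--Gundy inequality, and the occupation-time moment bound recycled from the proof of {\sc Lemma}~\ref{cut tail}. The differences (your explicit change of variables $x_v=-\tfrac12\sqrt t\,Hv$ and Minkowski's integral inequality, versus the paper's direct splitting of $d(L_u^x-L_u^0)+dL_u^0$ and termwise moment estimates) are purely organizational.
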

\begin{proof} By the above lemma, we only need to focus on analyzing
$$\int_0^11_{\mr_+}(W_u+\frac12\sqrt{t}Hu)du.$$
Without loss of generality we assume that $H>0$. Let $\varPsi(x,s)=1_{\mr^+}(x+\sqrt{t}Hs/2)$. Since $\varPsi(x,s)$ is uniformly bounded and can be approximated pointwise by bounded continuous functions, we can apply the occupation time formula \rf{cop} for $\varPsi(x,s)$ to obtain
\begin{align*}
&\int_0^11_{\mr^+}(W_u+\frac12\sqrt{t}Hu)du-\int_0^11_{\mr^+}(W_u)du\\
=&\int_\mr\int_0^11_{\mr^+}(x+\frac12\sqrt{t}Hu)dL_u^x\,dx-\int_\mr\int_0^11_{\mr^+}(x)dL_u^x\,dx\\
=&\int_\mr\int_0^11_{(-\sqrt{t}Hu/2,\,0)}(x)dL_u^xdx\\
=&\int_\mr\int_0^11_{(-\sqrt{t}Hu/2,\,0)}(x)d(L_u^x-L_u^0)dx+\frac12\sqrt{t}H\int_0^1udL_u^0.
\end{align*}
Our theorem claims that the first term on the rightmost side of this chain of equalities is of the order $O(t^{3/4})$ in $L^p$. By the simple change of variable $x = \sqrt ty$ and integrating out the $u$ variable this term becomes 
\begin{align}
&\int_\mr\int_0^11_{(-\sqrt{t}Hu/2,\,0)}(x)d(L_u^x-L_u^0)dx\nonumber\\
=&\sqrt t \int_{-H}^0(L_{1}^{\sqrt{t}y}-L^0_{1})dy-\sqrt t\int_{-H}^0(L_{-2y/H}^{\sqrt{t}y}-L^0_{-2y/H})dy. \label{remainders}
\end{align}
We first deal with the second term. We have
\begin{align}
\me\left|\int_{-H}^0(L_{-y/H}^{\sqrt{t}y}-L^0_{-y/H})dy\right|^p
\le&C\int_{-H}^0\me\left|L_{-2y/H}^{\sqrt{t}y}-L^0_{-2y/H}\right|^pdy.\label{second term Lp 2}
\end{align}
Using the representation of $L^x_t$ in (\ref{local time representation}) we obtain for each $y\in(-H,0)$
\begin{align*}
&\me|L_{-y/H}^{\sqrt{t}y}-L^0_{-y/H}|^p\\
\leq & C\left\{|\sqrt{t}y|^p+\me\left|\int_0^{-y/H}(\mathrm{sgn}(W_s-\sqrt{t}y)-\mathrm{sgn}(W_s))dW_s\right|^p\right\}\\
\leq & C\left\{|\sqrt{t}|^pH^p+\me\left(\int_0^{-y/H}\left|\mathrm{sgn}(W_s-\sqrt{t}y)-\mathrm{sgn}(W_s)\right|^2ds\right)^{p/2}\right\}\\
=\, & C\left\{|\sqrt{t}|^pH^p+\me\left(4\int_0^{-y/H}1_{(-\sqrt ty, 0]}(W_s)ds\right)^{p/2}\right\}
\end{align*}
Note that we use $C$ to denote a general constant whose value may differ from one appearance to another. 
The expectation in the last expression can be estimated in the same way as in the proof of {\sc Lemma} \ref{cut tail} and we have
$$\me\left(\int_0^{-y/H}1_{(-\sqrt ty, 0]}(W_s)ds\right)^{p/2}\le C\left(\int_0^1\me1_{(-\sqrt t y, 0]}(W_s)\, ds\right)^{p/2}\le C t^{p/4}.$$
It follows that the $L^p$-norm of the second term in \rf{remainders} has the order $O(t^{3/4})$. The same conclusion holds for the first term in \rf{remainders} by a similar but easier analysis. The proof is thus completed.  
\end{proof}

\section{Concluding remarks}

The purpose of this paper is to show that the leading term of the geometric deviation from the classical occupation time arcsine law for a Riemannian Brownian motion has the form
$$\frac12\sqrt tH\int_0^1u\, dL_u.$$
We have shown an $L^p$ version of such a result. With some hard work an almost sure version is also possible:
$$\mp\lbr\lim_{t\downarrow0}\frac1{\sqrt t}\left|T_t - \int_0^11_{\mr_+}(W_s)\, ds - \frac12\sqrt tH\int_0^1u\, dL_u\right| = 0\rbr = 1.$$
The  amount of work involved for proving such a result is not commensurable with its reward; for this reason we do not include the proof of this result in this paper.

One can also attempt to interpret the approximation in the sense of distribution, i.e., describing the difference
$$\me\varphi(T_t) -\frac1\pi\int_0^1 \frac{\varphi(x)}{\sqrt{x(1-x)}}\, dx$$
for all smooth function $\varphi$ with compact support. In this respect, the result is a bit surprising and can be described as follows. Let $\mu_t$ be the
distribution of $T_t$. Then we have
$$\mu_t = \mu_0 + \frac13\sqrt{\frac1{2\pi}} H\mu_1 \sqrt t + O(t^{3/4}),$$
where $\mu_0$ is the standard arcsine distribution and $\mu_1 = \partial_x \delta_0$, the derivative of the Dirac delta function (in the sense of distribution). Here we see much less than in the $L^p$ approximation. In particular, the probabilistic role of the local time completely disappears from the picture, although it leaves its shadow in the computation
\begin{align*}
\me\int_0^1udL_u&=\me \left(L_1-\int_0^1L_udu\right)\\
&=\me\left(|W_1|-|W_1|\int_0^1u^{1/2}du\right)\\
&=\left(1-\int_0^1u^{1/2}du\right)\me|W_1|\\
&=2\cdot\frac13 \sqrt{\frac1{2\pi}}.
\end{align*}
Here we have used the fact that $L_u$ (the local time of Brownian motion $W$ at $x  = 0$) and $\vert W_u\vert$ have the same law for each fixed $u$.


\begin{thebibliography}{99}

\bibitem{Hsu} Hsu, Elton P.,  {\it Stochastic Analysis on Manifolds}, Graduate Studies in Mathematics, Vol. 38, American Mathematical Society (2002).

\bibitem{Hsu1} Hsu, Elton P., On the principle of not feeling the boundary, {\it J. London Math. Soc. (2)}, {\bf 51}, 373-382 (1995).

\bibitem{Hsu2} Hsu, Elton P.,  Short-time asymptotics of the heat kernel on a concave boundary, {\it SIAM J. Math. Anal.}, {\bf 20}, no. 5, 1109-1127 (1989).

\bibitem{Levy} L\'evy, P., Sur un probl\`{e}me de M. Marcinkiewicz. C.R. Acad. Sci. Paris {\bf{208}} (1939) 318-321.

\bibitem{Levy2} L\'evy, P., Sur certains processus stochastiques homog\`{e}nes. Compositio Math. {\bf 7} (1939) 283-339.

\bibitem{Kac} Kac, Mark, On some connections between probability theory and differential and integral equations, {\it Proc. 2nd Berkeley Symp. on Math. Stat. \& Prob.}, University of California Press, 189--215 (1951).

\bibitem{KaratzasS} Karatzas, I. and Shreve, S., {\it Brownian Motion and Stochastic Calculus}, 2nd edtion, Springer (1991).

\bibitem{RevuzY} Revuz, D. and Yor, M., {\it Continuous Martingales and Brownian Motion}, 2nd edition, Springer-Verlag, New York (2010).

\bibitem{Wang} Wang, A. T., Generalized It\^o's formula and additive functionals of Brownian  motion, {\it Z. Wahr. verw. Gebiete}, {\bf 41}, 153--159 (1977).

\end{thebibliography}
\end{document}